\theoremstyle{plain}
\newtheorem{theorem}{\bf Theorem}[section]
\newtheorem{corollary}{\bf Corollary}[section]
\newtheorem{lemma}[theorem]{\bf Lemma}
\newtheorem{proposition*}{\bf Proposition}
\numberwithin{equation}{section}
\newcommand\dM{dv_M}
\newcommand\dB{dv_B}
\begin{document}

\title[Generic spectrum of warped products and G-manifolds]{Generic spectrum of warped products and G-manifolds}

\author{Marcus A.M. Marrocos$^1$}
\author{Jos\'e N.V. Gomes$^{2,3}$}

\address{$^1$CMCC-Universidade Federal do ABC, Av. dos Estados, 5001, 09210-580 Santo Andr\'e, S\~ao Paulo, Brazil.}
\address{$^2$Departamento de Matem\'atica, Universidade Federal do Amazonas, Av. General Rodrigo Oct\'avio, 6200, 69080-900 Manaus, Amazonas, Brazil.}
\address{$^3$Present Address: Department of Mathematics, Lehigh University, Christmas-Saucon Hall, 14 East Packer Avenue, Bethlehem, Pennsylvania, 18015-
3174, US.}

\email{$^1$marcus.marrocos@ufabc.edu.br}
\email{$^2$jnvgomes@pq.cnpq.br; jnvgomes@gmail.com}

\urladdr{$^1$http://cmcc.ufabc.edu.br}
\urladdr{$^2$https://ufam.edu.br; http://ppgm.ufam.edu.br}

\thanks{$^1$Partially supported by Grant 2016/10009-3, São Paulo Research Foundation (FAPESP)}
\thanks{$^2$Partially supported by Grant 202234/2017-7, Conselho Nacional de Desenvolvimento Cient\'ifico e Tecnol\'ogico (CNPq), of
the Ministry of Science, Technology and Innovation of Brazil}

\keywords{Laplacian; Eigenvalues; Warped products; G-manifolds}

\subjclass[2010]{Primary 47A75; Secondary 35P05, 47A55}

\begin{abstract}
In this paper, we establish a kind of splitting theorem for the eigenvalues of a specific family of operators on the base of a warped product. As a consequence, we prove a density theorem for a set of warping functions that makes the spectrum of the Laplacian a warped-simple spectrum. This is then used to study the generic situation of the eigenvalues of the Laplacian on a class of compact G-manifolds. In particular, we give a partial answer to a question posed in 1990 by Steven Zelditch about the generic situation of multiplicity of the eigenvalues of the Laplacian on principal bundles.
\end{abstract}
\maketitle

\section{Introduction}

The analysis of the spectrum of the Laplacian on compact manifolds is a central topic in both mathematics and physics. Not only the literature about this subject is already very rich, but also it is not unlikely that the Laplacian may play a fundamental role in the understanding of countless physical facts. In this paper, we discuss an interesting case of the spectrum of the Laplacian which stems from work of Uhlenbeck~\cite{Uhlenbeck} from which we know that the set of $C^k$~Riemannian metrics on an $n$--dimensional compact smooth manifold $M^n$ whose Laplacian has simple spectrum is residual, for $2\leq k<\infty$. Moreover, the same result is also known for a family of elliptic operators $\Delta_b=\Delta+b$, for $b\in C^k(M)$.

Warped product appears in a natural manner in Riemannian geometry and their applications abound. They are source of examples and counter-examples of many classical geometric questions. In this work, we relate them to generic results such as of the Uhlenbeck. In short, we study warped products in order to obtain a class of $G$--manifolds (that contain principal bundles) so that is possible to describe their generic spectrum, cf. Corollaries~\ref{generic-G-eigenvalue} and \ref{question2}. By a $G$--manifold we mean a smooth manifold $M$ endowed with a smooth effective action by a Lie group $G$. In particular, we give a partial answer to a question posed in 1990 by Zelditch~\cite{zelditch} about the generic situation of multiplicity of the eigenvalues of the Laplacian on principal bundles. Besides, we believe that generic properties of the spectrum of warped products are of independent interest.

Our work was motivated by the ideas to split the eigenvalues by using the Hadamard derivative formula of the eigenvalue curves. The use of this technique is common in the works of some authors, see e.g. Albert~\cite{A}, Berger~\cite{Berger}, El Soufi and Ilias~\cite{ahmad}, Henry~\cite{Henry} and Pereira~\cite{pereira}.

Firstly, we establish a kind of splitting eigenvalues theorem considering a family of operators on the base of a warped product, see Eq.~\eqref{Lfmu} and Theorem~\ref{separete_L}. For this class of operators, the parameter is present in the first and zero order coefficients in a very specific way. We point out that the referred class is not covered by the Uhlenbeck's works. As an application of Theorem~\ref{separete_L} we prove a density theorem for a set of warping functions that make the spectrum of the Laplacian a warped--simple spectrum\footnote{See definition given in Section~\ref{Sec-GWSE}.}, cf. Theorem~\ref{warped_theo}.

It is well-known in the literature that results in the spirit of Uhlenbeck~\cite{Uhlenbeck} are no longer true in the presence of symmetries. Thus, one can ask the following question:

\emph{What can be said about the generic situation of multiplicity of the eigenvalues of the Laplacian in the presence of symmetries?}

Arnol'd~\cite{arnold} proposed in an indirect way this question on the context of Dirichlet integral in symmetric domains of $\mathbb{R}^n$. There are also several other works in the literature about this subject, see e.g. \cite{Marrocos, pereira1, pereira2, schueth, zelditch}.

In the Riemannian setting, we highlight two main results. The first one has been recently proven by Schueth~\cite{schueth}, who considered the generic irreducibility of the eigenspace of the Laplacian associated to a left $G$--invariant metric on a Lie group $G$. She translated the problem to Lie algebraic setting and algebraic tools were used instead of the analytic ones. The second one had been before proven by Zelditch~\cite{zelditch}. He treated essentially the case where the group $G$ is finite and assumed that all dimensions of the irreducible representations of $G$ are smaller than the dimension of the manifold. He also emphasized that his technique fail completely without this hypothesis.

We consider a class of $G$--manifolds $N\times G/K$, where the Lie group $G$ (not necessary Abelian) has irreducible representations with arbitrary larger dimensions and it has positive cohomogeneity, i.e., the dimension of the quotient space is nonzero, see Theorem~\ref{generic-G-eigenvalue}. Then, both techniques used by Schueth and Zelditch cannot be applied directly in our case, but even so we apply a Schueth's result to prove the genericity irreducibility of the eigenspace when the smooth $G$--action is free since it is necessary to take a $G$--simple Riemannian metric on $G$, cf. Corollary~\ref{question2}. Besides, Zelditch's work still remains a prototype for us. The real novelty lies in the use of warped product theory to obtain an eigenspace irreducibility result on that class of $G$--manifolds.

\section{Spectrum of Warped Products}

In this section, we fix notation, comments about facts that will be used in our proofs and list without proofs all the main formulas which will be appropriated for us. The material we summarize here is all well-known. Bishop-O'Neill~\cite{oneil}, Ejiri~\cite{ejiri} and Zelditch~\cite{zelditch} are some references for it.

Let $M=B^n\times_f F^m$ be a warped product of two Riemannian manifolds with warped metric $g=g_B+f^2g_F$, where $f$ is a warping function on the base $B$ of $M$. We denote by $\Delta^{B}$, $\Delta^{F}$ and $\Delta^{B\times_fF}$ the Laplacians of $B$, $F$ and $B\times_f F$, respectively.

The next result shows how to compute the Laplacian on $M$. Its proof follows by straightforward computation from \cite[Lemma~7.3]{oneil}  or, alternatively, see  \cite[Lemma~2.1]{ejiri} for a very detailed and readable account.

\begin{lemma} For all $C^2$--function $u$ on $B^n\times_f F^m$ holds
\begin{equation}\label{laplace-warped}
\Delta^{B\times_fF}u= \Delta^B u + \frac{m}{f}g_B(\nabla u, \nabla f) + \frac{1}{f^2}\Delta^F u.
\end{equation}
\end{lemma}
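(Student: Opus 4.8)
The plan is to compute $\Delta^{B\times_f F}u$ directly from the divergence form of the Laplacian in a product chart and to sort the resulting terms according to whether the derivatives fall on base or on fiber directions. Fix local coordinates $(x^1,\dots,x^n)$ on $B$ and $(y^1,\dots,y^m)$ on $F$, so that $(x,y)$ is a chart on $M=B\times_f F$. There the warped metric is block diagonal, $g=g_B\oplus f^2g_F$, with $g_B=(g_{ab}(x))$, $g_F=(g_{\alpha\beta}(y))$ and $f=f(x)$; hence $g^{ab}=g_B^{ab}$, $g^{\alpha\beta}=f^{-2}g_F^{\alpha\beta}$, and $\det g=f^{2m}\det g_B\det g_F$, so that $\sqrt{\det g}=f^m\sqrt{\det g_B}\sqrt{\det g_F}$.

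I would then substitute these into $\Delta^{B\times_f F}u=\tfrac{1}{\sqrt{\det g}}\,\partial_i\!\big(\sqrt{\det g}\,g^{ij}\partial_j u\big)$ and split the sum over $i$ into base indices $a$ and fiber indices $\alpha$. In the base part, $\sqrt{\det g_F}$ does not depend on $x$ and cancels; differentiating the surviving factor $f^m\sqrt{\det g_B}$ produces $\Delta^B u$ together with the extra term $\tfrac{1}{f^m}(\partial_a f^m)\,g_B^{ab}\partial_b u=\tfrac{m}{f}\,g_B(\nabla f,\nabla u)$. In the fiber part, $f^m$, $f^{-2}$ and $\sqrt{\det g_B}$ are all independent of $y$, so they come out of the $y$-derivative and combine into the single factor $f^{-2}$, which leaves $\tfrac{1}{f^2}\Delta^F u$. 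Summing the two contributions gives \eqref{laplace-warped}.

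A coordinate-free alternative is to start from $\Delta^{B\times_f F}u=\tr_g\nabla^2u$ and use the warped-product connection formulas (see \cite{oneil}): $\nabla^M_XY$ is the lift of $\nabla^B_XY$ when $X,Y$ are tangent to $B$, while $\nabla^M_VV=\nabla^F_VV-\tfrac{\langle V,V\rangle}{f}\nabla f$ when $V$ is tangent to $F$. Evaluating the trace in an adapted orthonormal frame — horizontal vectors $e_i$ orthonormal for $g_B$ and vertical vectors $f^{-1}\epsilon_\alpha$ with $\epsilon_\alpha$ orthonormal for $g_F$ — the horizontal trace yields $\Delta^B u$, and the vertical trace yields $\tfrac{1}{f^2}\Delta^F u$ plus $\tfrac{m}{f}g_B(\nabla f,\nabla u)$, using $\langle\epsilon_\alpha,\epsilon_\alpha\rangle=f^2$ and the fact that $\nabla f$ is horizontal.

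There is no genuine obstacle here: the statement is a routine local computation, and the only thing needing care is the bookkeeping of the three operators $\Delta^B$, $\Delta^F$, $\Delta^{B\times_f F}$ and of the gradients — in particular, that $\Delta^B$ and $\nabla f$ refer to the \emph{unwarped} base metric $g_B$ (with the fiber point frozen), that $\Delta^F$ refers to the unwarped fiber metric $g_F$, and that the product chart makes $g$, $g^{-1}$ and $\det g$ split multiplicatively, so that the $x$- and $y$-derivatives decouple.
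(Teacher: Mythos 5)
Your computation is correct, and your second (connection-based) argument is precisely the ``straightforward computation from Lemma~7.3 of Bishop--O'Neill'' that the paper invokes in lieu of a written proof, while your coordinate computation matches the detailed account in Ejiri that the paper also cites. Nothing further is needed.
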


In what follows, it will be fundamental to assume that both the base $B$ and the fiber $F$ of $M$ are compact manifolds. In particular, the spectrum $0=\mu_0< \mu_1\leq\cdots \leq\mu_i\leq\cdots$ of $\Delta^F$ is discrete. Let $\{\psi_i\}_{i=0}^{\infty}$ be a complete basis of $L^2(F)$ given by eigenfunctions of $\Delta^F$. For each eigenvalue $\mu$ such that $\Delta^F\psi=-\mu\psi$ and each $C^2$--function $\phi$ on $B$ we compute
\begin{eqnarray}\label{laplace-warped2}
\nonumber\Delta^{B\times_f F}(\psi\phi) & \stackrel{\eqref{laplace-warped}}{=} & \psi\Delta^B\phi + \psi\frac{m}{f}g_B(\nabla\phi,\nabla f) +\frac{\phi}{f^2}\Delta^F\psi\\
& = & \psi(\Delta^B\phi +\frac{m}{f}g_B(\nabla\phi,\nabla f) -\frac{\mu}{f^2}\phi).
\end{eqnarray}

Let us define a differential operator $L^f_\mu$ on $B$ by
\begin{equation}\label{Lfmu}
L^f_\mu\phi = \Delta^B\phi + \frac{m}{f}g_B(\nabla\phi, \nabla f) -\frac{\mu}{f^2}\phi,
\end{equation}
where $f$ is a positive function on $B$, $\mu$ is a real number and $\phi$ is a function on $B$. From \cite[Lemma~2.3]{ejiri}, $L^f_\mu$ is a strongly elliptic, self-adjoint differential operator of $(B,f^{2m/n}g_B)$.

We now adapt some known facts from perturbation theory to the situation of warped products. Among other things, we deduce an equation for the multiplicity of an eigenvalue of $\Delta^{B\times_f F}$ in terms of $L_\mu^f$, and a variational formula for an eigenvalue of $L_\mu^f$ along a family of warping functions. For a support material, we refer the reader to Bando-Urakawa~\cite{Bando}, Berger~\cite{Berger} and Soufi-Ilias~\cite{ahmad}.

For each eigenvalue $\mu_i$ of $\Delta^F$, we consider $0<\lambda^{\mu_i}_0\leq\lambda^{\mu_i}_1\leq\cdots\leq\lambda^{\mu_i}_j\leq\cdots$ the eigenvalues of the operator $L^f_{\mu_i}$ and $\{\phi_j^{\mu_i}\}_{j=0}^\infty$ a complete basis of $L^2(B,f^{2m/n}g_B)$ such that $L^f_{\mu_i}\phi_j^{\mu_i}=-\lambda_j^{\mu_i}\phi_j^{\mu_i}$. Using Eq.~\eqref{laplace-warped2}, we obtain $\Delta^{B\times_fF}(\psi_{i}\phi_j^{\mu_i})=-\lambda_j^{\mu_i}\psi_{i}\phi_j^{\mu_i}$ which implies that $\{\psi_{i}\phi_j^{\mu_i}\}_{i,j=0}^{\infty}$ is a complete basis of $L^2(B\times_f F)$, cf. \cite[Theorem~2.1]{ejiri}. Moreover, we can deduce that the multiplicity $m_{\Delta^{B\times_fF}}(\lambda)$ of an eigenvalue $\lambda$ of $\Delta^{B\times_f F}$ depends on the multiplicity $m_{L^f_{\mu}}(\lambda)$ of $\lambda$ as an eigenvalue of $L^f_{\mu}$ and the multiplicity $m_{\Delta^F}(\mu)$ of an eigenvalue $\mu$ of $\Delta^F$. More precisely,
\begin{align}\label{multiplicity formula}
m_{\Delta^{B\times_fF}}(\lambda)=\sum_{\mu\in spec(\Delta^F)}m_{L^f_{\mu}}(\lambda)m_{\Delta^F}(\mu).
\end{align}

Let $\mathcal{M}$ denote the Banach space (with the fixed $C^k$~topology) of $C^k$~Riemannian metrics on a compact smooth manifold $M$.

Given an eigenvalue $\lambda_{0}$ of $\Delta_{g_0}$ with multiplicity $m(\lambda_{0})$, there exist a positive number $\epsilon_{\lambda_0, g_0}$ and a neighborhood $\mathcal{V}_{\epsilon}$ in $\mathcal{M}$ such that for all $g\in \mathcal{V}_{\epsilon}$ holds
\begin{equation}\label{kato-continuity}
\sum_{\{|\lambda-\lambda_{0}|<\epsilon_{\lambda_0, g_0}\}\cap spec(\Delta_g)}m(\lambda)=m(\lambda_{0}).
\end{equation}

Let $g(t)$ be a real analytic deformation of $g(0)=g_0$, and let $\Delta_{g(t)}$ be the family of associated Laplace operators. By Kato choice's theorem there are $\epsilon> 0$, real analytic curves $\lambda_k(t)$ and $\varphi_k(t)$ such that
\begin{equation}\label{kato}
\Delta_{g(t)}\varphi_k(t)=-\lambda_k(t)\varphi_k(t) \quad \mbox{and}\quad (\varphi_k(t),\varphi_\ell(t))_t=\delta_{k\ell}\quad\mbox{for all}\quad |t|<\epsilon,
\end{equation}
where $(\cdot,\cdot)_t$ stands for the inner product of $L^2(M,\mathrm{dvol}_{g(t)})$. Moreover, $\{\lambda_k(t)\}_{k=0}^{\infty}$ is the spectrum of $\Delta_{g(t)}$ and $\{\varphi_k(t)\}_{k=0}^{\infty}$ is an orthonormal basis of $L^2(M,dv_{g(t)})$.

We point out that the same results given in \eqref{kato-continuity} and \eqref{kato} hold for the family of the operators $L^f_\mu$, with $f\in C^k_+(B)=\{f\in C^k(B): f>0\}$ endowed with the $C^k$~topology, $k\geq 2$.

We only need to consider linear deformations of the form $g(t)=g_0+tH$, where $H$ is a symmetric covariant $2$--tensor on $M$. We recall that
\begin{equation}\label{deriv-Eigenv}
\lambda'_k=-\int_{M}\langle d\varphi_k\otimes d\varphi_k-\frac{1}{4}\Delta(\varphi_k^2)g_0,H\rangle \dM,
\end{equation}
where $\langle\cdot,\cdot\rangle$ stands for the inner product induced on the space of covariant $2$--tensors on $M$. For a proof see \cite{Berger} or, alternatively, \cite[Proposition~2.1]{ahmad} for a very detailed account.

Now let us consider $M=B^n\times F^m$ endowed with a warped metric $g_0=g_B+ f_0^2g_F$ and a real analytic deformation $g(t)=g_B+f(t)^2g_F$, where $f(t)=f_0+tr$ with $r\in C^k(B)$ and $f_0\in C^k_+(B)$.

Henceforth, since there is no danger of confusion, we will use the same notation $\langle\cdot,\cdot\rangle$ for the inner product on $M$ and $B$ as well as on the space of covariant $2$--tensors on a Riemannian manifold.

Let $\lambda(t)$ be an eigenvalue of $\Delta^{B\times_{f(t)} F}$ and $\psi\phi^\mu$ its corresponding normalized real-valued eigenfunction such that  $(L^f_\mu+\lambda)\phi^\mu=0 $ and $(\Delta^F+\mu)\psi=0$. Since $H=2f_0rg_F$ and $\dM=f^m\mathrm{dvol}_{B\times F}$, we use Fubini's theorem to compute the two terms of Eq.~\eqref{deriv-Eigenv} as follows
\begin{equation*}
\int_{M} \langle d(\psi\phi^\mu)\otimes d(\psi\phi^\mu),2f_0rg_F \rangle \dM=\int_B\frac{2\mu}{f_0^2}(\phi^{\mu})^2f_0^{m+1}r\dB
\end{equation*}
and
\begin{equation*}
\int_{M}\langle-\frac{1}{4}\Delta^{M} (\phi^\mu\psi)^2g,2f_0rg_F\rangle \dM  =  m\int_B\big[\big(\lambda-\frac{\mu}{f_0^2}\big)(\phi^\mu)^2-|\nabla\phi^\mu|^2\big]f_0^{m+1}r\dB.
\end{equation*}
Hence,
\begin{equation}\label{warped_derivative}
\lambda'(r)=-\int_B\big[ \big(m\lambda +(2-m)\frac{\mu}{f_0^2}\big)(\phi^\mu)^2-m|\nabla\phi^\mu|^2\big]f_0^{m+1}r\dB.
\end{equation}

In order to prove our first theorem we will need a variational formula for an eigenvalue $\lambda$ of $L_\mu^f$ along the family $f(t)=f_0+tr$. For this, we note that  $L_\mu^f$ and $\Delta^{B\times_f F}$ share the same eigenvalues, in general with different multiplicity, cf. Eq.~\eqref{multiplicity formula}. Hence, Eq.~\eqref{warped_derivative} is also a variational formula to $\lambda$ as an eigenvalue of $L_\mu^f$. Thus there should be no cause for confusion if we simplify the notation by writing $\lambda$ for $\lambda^\mu$, so that its corresponding eigenfunction is denoted by $\phi$. Notice that
\begin{equation}\label{Prop1-L}
L^{f}_\mu(\phi^2)=2\phi L^{f}_\mu\phi+2|\nabla\phi|^2+\frac{\mu}{f^2}\phi^2
\end{equation}
for all $\phi\in C^2(B).$ With this considerations in mind, we can rewrite \eqref{warped_derivative} as follows.
\begin{lemma}
If $\phi(t)$ and $\lambda(t)$ are differentiable curves of eigenfunctions and eigenvalues of the operators $L_\mu^{f(t)}$, respectively, then
\begin{equation}\label{derivate_L}
\lambda'= \frac{1}{2}\int_B\big[mL^{f_0}_\mu(\phi^2)+(m-4)\frac{\mu}{f_0^2}\phi^2\big]f^{m+1}r\dB.
\end{equation}
\end{lemma}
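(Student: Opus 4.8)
The plan is to obtain \eqref{derivate_L} as a purely algebraic consequence of the variational formula \eqref{warped_derivative} together with the pointwise identity \eqref{Prop1-L} and the eigenvalue equation. Since $\phi$ is an eigenfunction of $L^{f_0}_\mu$ with eigenvalue $\lambda$, we have $L^{f_0}_\mu\phi=-\lambda\phi$, and feeding this into \eqref{Prop1-L} gives
\begin{equation*}
L^{f_0}_\mu(\phi^2)=-2\lambda\phi^2+2|\nabla\phi|^2+\frac{\mu}{f_0^2}\phi^2,
\end{equation*}
which I would solve for the gradient term:
\begin{equation*}
m|\nabla\phi|^2=\frac{m}{2}L^{f_0}_\mu(\phi^2)+m\lambda\phi^2-\frac{m\mu}{2f_0^2}\phi^2.
\end{equation*}

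Next I would substitute this expression for $m|\nabla\phi|^2$ into the integrand of \eqref{warped_derivative}. The $m\lambda\phi^2$ coming from $m|\nabla\phi|^2$ cancels against the $-m\lambda\phi^2$ already present, and collecting the remaining multiples of $\tfrac{\mu}{f_0^2}\phi^2$ yields the coefficient $-(2-m)-\tfrac{m}{2}=\tfrac{m}{2}-2$, so the bracket in \eqref{warped_derivative} becomes $\tfrac12\big(mL^{f_0}_\mu(\phi^2)+(m-4)\tfrac{\mu}{f_0^2}\phi^2\big)$. Reinserting the weight $f_0^{m+1}r$ and the outer integral over $B$ (and recalling that at $t=0$ one has $f(t)=f_0$, so the weight may be written $f^{m+1}$) gives exactly \eqref{derivate_L}. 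Strictly speaking one should also note that \eqref{warped_derivative} was derived for the eigenfunction $\psi\phi^\mu$ normalized in $L^2(M,\dM)$; since the factorized form is compatible with the normalization of $\phi$ in $L^2(B,f_0^{2m/n}g_B)$ used for $L^{f_0}_\mu$, the formula transfers verbatim to $\lambda$ regarded as an eigenvalue of $L^{f_0}_\mu$, as already observed in the paragraph preceding the statement.

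There is no genuine obstacle here; the only thing to be careful about is the bookkeeping of the three scalar coefficients ($m\lambda$, $(2-m)\tfrac{\mu}{f_0^2}$, and the $\tfrac{m\mu}{2f_0^2}$ produced by \eqref{Prop1-L}) and the sign carried by the overall minus in \eqref{warped_derivative}, so that the $\lambda$--dependence drops out entirely and the constant in front of $\tfrac{\mu}{f_0^2}\phi^2$ comes out as $m-4$ after the factor $\tfrac12$ is pulled out. I would present the computation in a single displayed chain of equalities for the integrand, then state the result.
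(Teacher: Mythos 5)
Your proposal is correct and is exactly the computation the paper intends: the lemma is obtained by substituting the eigenvalue equation into \eqref{Prop1-L}, solving for $m|\nabla\phi|^2$, and inserting the result into \eqref{warped_derivative}, whereupon the $m\lambda\phi^2$ terms cancel and the coefficient $-(2-m)-\tfrac{m}{2}=\tfrac{m-4}{2}$ emerges as you describe. Your remark about the normalization transferring from $L^2(M,\dM)$ to $L^2(B,f_0^{2m/n}g_B)$ is the same observation the paper makes in the paragraph preceding the lemma, so nothing is missing.
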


\section{Genericity of Warped--Simple Eigenvalues}\label{Sec-GWSE}

Here we analyze the generic spectrum of the Laplace operator defined on a compact Riemannian warped product $B\times_f F$ parametrized by warping functions $f$ on $B$. We say that an eigenvalue $\lambda$ of $\Delta^{B\times_fF}$ is \emph{warped--simple} if there is only one eigenvalue $\mu$ of $\Delta^F$ such that $\lambda$ is an eigenvalue of $L^f_{\mu}$ with multiplicity $m_{L^f_{\mu}}(\lambda)=1$, i.e., $m_{\Delta^{B\times_fF}}(\lambda)=m_{\Delta^F}(\mu)$.

We are almost in a position to prove that the set of all warping functions $f$ of class $C^k$ such that $\Delta^{B\times_fF}$ has \emph{warped--simple spectrum} is residual in $C_+^k(B)$. Firstly, as mentioned earlier we establish a splitting eigenvalue theorem for the family of operators $\{L^f_{\mu}\}_{f\in C_+^k(B)}$.
\begin{theorem}\label{separete_L}
Let $\lambda_0$ be an eigenvalue of the operator $L_{\mu}^{f_0}$ with multiplicity $m>1$. Take a positive number $\epsilon_{\lambda_0,f_0}$ and a neighborhood $\mathcal{V}_{\epsilon}$ of $f_0$ in $C_+^k(B)$ as in \eqref{kato-continuity}. Then for each open neighborhood $\mathcal{U}\subset \mathcal{V}_{\epsilon}$ there is $f\in\mathcal{U}$ such that all eigenvalues $\lambda^f$ of $L^f_\mu$ with $|\lambda^f-\lambda_0|<\epsilon_{\lambda_0,f_0}$ are simple.
\end{theorem}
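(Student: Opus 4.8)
The plan is to run the standard Albert--Uhlenbeck splitting argument, but adapted to the one-parameter family of warping-function perturbations $f(t)=f_0+tr$ and the variational formula \eqref{derivate_L}. We argue by downward induction on the multiplicity. Suppose $\lambda^{f}$ is an eigenvalue of $L^{f}_\mu$ of multiplicity $\ell>1$ for some $f$ in a given open set $\mathcal{U}\subset\mathcal{V}_\epsilon$; it suffices to produce $\tilde f\in\mathcal{U}$ arbitrarily close to $f$ for which, inside the window $\{|\lambda-\lambda_0|<\epsilon_{\lambda_0,f_0}\}$, the total multiplicity count \eqref{kato-continuity} is unchanged but the given cluster has been split, i.e. some eigenvalue there has strictly smaller multiplicity. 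Iterating finitely many times (the count is bounded by $m$) and using that the set of $f$ with only simple eigenvalues in the window is open, one lands inside $\mathcal{U}$ with all eigenvalues in the window simple; a routine Baire-type bookkeeping over a countable cover then upgrades this to the stated existence.

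For the splitting step, fix $f$ and an eigenvalue $\lambda=\lambda^f$ of multiplicity $\ell>1$, with orthonormal eigenfunctions $\phi_1,\dots,\phi_\ell$ in $L^2(B,f^{2m/n}g_B)$. Consider the analytic deformation $f(t)=f+tr$ and apply Kato's selection (valid for the family $L^f_\mu$ by the remark after \eqref{kato}) to get analytic branches $\lambda_i(t)$, $\phi_i(t)$ emanating from $\lambda$. By \eqref{derivate_L} the initial velocities are the eigenvalues of the $\ell\times\ell$ symmetric matrix
\begin{equation*}
Q(r)_{ij}=\frac12\int_B\Bigl[m\,L^{f}_\mu(\phi_i\phi_j)+(m-4)\frac{\mu}{f^2}\phi_i\phi_j\Bigr]f^{m+1}r\,\dB ,
\end{equation*}
the bilinear (polarized) form attached to \eqref{derivate_L}. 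If for \emph{some} choice of $r\in C^k(B)$ the matrix $Q(r)$ has at least two distinct eigenvalues, then for small $t$ the branches $\lambda_i(t)$ are no longer all equal, so the multiplicity of $\lambda$ has dropped while \eqref{kato-continuity} keeps the total count fixed; choosing $t$ small keeps $f(t)\in\mathcal{U}$, and we are done with this step. So the heart of the matter is the nondegeneracy claim: it is \emph{not} possible that $Q(r)$ is a scalar multiple of the identity for every $r$.

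The main obstacle is exactly this nondegeneracy. Suppose, for contradiction, that $Q(r)=c(r)\,\mathrm{Id}$ for all $r$; then the off-diagonal entries vanish and the diagonal entries agree, i.e. for all $r$ and all $i\neq j$,
\begin{equation*}
\int_B\Bigl[m\,L^{f}_\mu(\phi_i\phi_j)+(m-4)\frac{\mu}{f^2}\phi_i\phi_j\Bigr]f^{m+1}r\,\dB=0,
\end{equation*}
and the analogous difference for $i=j$ vanishes. Since $r$ ranges over a dense subspace of continuous functions, the bracketed densities must vanish pointwise on $B$:
\begin{equation*}
m\,L^{f}_\mu(\phi_i\phi_j)+(m-4)\frac{\mu}{f^2}\phi_i\phi_j\equiv 0\quad(i\neq j),
\end{equation*}
and $m\,L^{f}_\mu(\phi_i^2)+(m-4)\frac{\mu}{f^2}\phi_i^2$ is independent of $i$. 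Now I would expand $L^{f}_\mu(\phi_i\phi_j)$ using the product rule \eqref{Prop1-L} (polarized) together with $L^f_\mu\phi_i=-\lambda\phi_i$; this turns the first relation into
\begin{equation*}
2m\,\langle\nabla\phi_i,\nabla\phi_j\rangle+\Bigl[(1-m)\mu/f^2\cdot 2 - 2m\lambda\Bigr]\phi_i\phi_j
= \text{(a pointwise algebraic identity)},
\end{equation*}
up to bookkeeping of the coefficients — the precise form being $m\langle\nabla\phi_i,\nabla\phi_j\rangle=\bigl(m\lambda-(m-1)\mu/f^2\bigr)\phi_i\phi_j$ for $i\neq j$, and a similar scalar-valued identity in the diagonal case. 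The strategy is then to derive a contradiction from these overdetermined relations: from the $i\neq j$ identities one gets that $\langle\nabla\phi_i,\nabla\phi_j\rangle$ and $\phi_i\phi_j$ are pointwise proportional with a fixed non-constant factor $\lambda-\frac{m-1}{m}\mu/f^2$, while the diagonal identities force the $\phi_i$ to have, roughly, the same ``energy profile''; combining these with the unique-continuation property for the elliptic operator $L^f_\mu$ (an eigenfunction cannot vanish on an open set) and with $\dim\geq 2$ shows the nodal/gradient structure of the $\phi_i$ is too rigid to support an $\ell$-dimensional eigenspace with $\ell>1$. This unique-continuation plus pointwise-identity contradiction is the technical core; everything else (Kato analyticity, the finite induction on multiplicity, and the Baire-category wrap-up over a countable basis of the window) is routine and follows the template in Uhlenbeck \cite{Uhlenbeck} and El Soufi--Ilias \cite{ahmad}.
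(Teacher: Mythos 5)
Your overall framework (Kato analytic branches, the first--derivative matrix $Q(r)$ attached to \eqref{derivate_L}, reduction to the nondegeneracy claim that $Q(r)$ cannot be a multiple of the identity for every $r$, and the finite induction on multiplicity) matches the paper's set-up: the paper likewise reduces to the pointwise identity obtained by setting the bracketed densities equal, which after applying \eqref{Prop1-L} becomes
\begin{equation*}
|\nabla\phi_i|^2+\Bigl(-\lambda_0+\tfrac{m-2}{m}\tfrac{\mu}{f_0^2}\Bigr)\phi_i^2
=|\nabla\phi_j|^2+\Bigl(-\lambda_0+\tfrac{m-2}{m}\tfrac{\mu}{f_0^2}\Bigr)\phi_j^2 .
\end{equation*}
The problem is that the step you label the ``technical core'' --- extracting a contradiction from these overdetermined pointwise relations via unique continuation and rigidity of the nodal/gradient structure --- is exactly the step you do not carry out, and it is exactly the step the authors state they could \emph{not} carry out: the paper says explicitly that they were unable to derive a contradiction from this first-order identity alone. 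Note that, unlike Uhlenbeck's metric-perturbation setting where $H$ ranges over all symmetric $2$--tensors and one gets the full family of identities on $d\varphi_i\otimes d\varphi_j$, here the perturbation is a single scalar $r$, so one only obtains the one trace-like combination above; this is much weaker, and your appeal to the ``template in Uhlenbeck'' does not transfer.

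What the paper actually does at this point is different in kind: it assumes the identity persists for all $f$ in a neighborhood, differentiates it once more along the deformation $f(t)=f_0+tr$ to get \eqref{identity_separete_2_derivated}, regards the resulting expression as an operator $\Gamma:r\mapsto(\cdots)$ of finite rank, and invokes Henry's theory of finite-rank perturbation operators (computing the principal part of $\Gamma$ as a singular integral against the Green's function) to force $\langle X\otimes Y,\mathrm{Sym}(\nabla\xi\otimes\nabla\eta)\rangle=0$, hence $X(\xi)X(\eta)=0$ for all vector fields, hence $|\nabla\phi_i|=|\nabla\phi_j|$, and only then does the first-order identity yield $\phi_i=\phi_j$. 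So your proposal is missing the entire second-order/finite-rank-operator argument that constitutes the actual proof; as written, the contradiction is asserted rather than proved, and there is no reason to believe the first-order pointwise identities alone suffice. (The coefficient $(m-1)$ in your polarized identity should also be $(m-2)$, but that is minor bookkeeping.)
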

\begin{proof}
The proof is by contradiction. Suppose that there is an open neighborhood $\mathcal{U}\subset \mathcal{V}_\epsilon$ of $f_0$ such that for all $f\in \mathcal{U}$ the eigenvalue $\lambda^f$ of $L_\mu^f$ with $|\lambda^f-\lambda_0|< \epsilon_{\lambda_0, f_0}$ has multiplicity $m>1$. In this case, for any smooth function $r$ the perturbation $f(t)=f_0+tr$ fails to split the eigenvalue $\lambda_0(t)$, and the $m$ curves given by \eqref{kato} must satisfy $\lambda_j'(t)=\lambda_i'(t)$ for small $t$. From Eq.~\eqref{derivate_L} we obtain
\begin{equation}\label{EqAux1Thm3-1}
mL^{f_0}_\mu(\phi_i^2)+(m-4)\frac{\mu}{f_0^2}\phi_i^2=mL^{f_0}_\mu(\phi^2_j)+(m-4)\frac{\mu}{f_0^2}\phi^2_j.
\end{equation}
It follows from \eqref{Prop1-L} that \eqref{EqAux1Thm3-1} is equivalent to
\begin{equation}\label{identity_separete}
|\nabla\phi_i|^2 +\big(-\lambda_0 +\frac{m-2}{m}\frac{\mu}{f_0^2}\big)\phi_i^2=|\nabla\phi_j|^2+\big(-\lambda_0 +\frac{m-2}{m}\frac{\mu}{f_0^2}\big)\phi_j^2.
\end{equation}
Unfortunately, we were not able to use the identity~\eqref{identity_separete} to get a contradiction. So, we assume that it holds in an open neighborhood $\mathcal{U}$ of $f_0$, i.e., for all $f\in\mathcal{U}$ and an eigenvalue $\lambda^f$ with $|\lambda^f-\lambda_0|< \epsilon_{\lambda_0,f_0}$ we have \eqref{identity_separete}.

Let us define for convenience $\xi:=\phi_i-\phi_j$ and $\eta:=\phi_i+\phi_j$. By regrouping some terms in \eqref{identity_separete} we obtain
\begin{equation}\label{identity_separete_2}
\langle\nabla\xi,\nabla\eta\rangle +\big(-\lambda^f +\frac{m-2}{m}\frac{\mu}{f^2}\big)\xi\eta=0.
\end{equation}
Consider $f(t)=f_0+tr$ with $r\in C^k(B)$ such that \eqref{identity_separete_2} holds for all sufficiently small $t$.
Taking the $t$-derivative we get
\begin{equation}\label{identity_separete_2_derivated}
\langle\nabla\dot{\xi},\nabla\eta\rangle + \langle\nabla\xi,\nabla\dot{\eta}\rangle + \big(-\lambda_0 +\frac{m-2}{m}\frac{\mu}{f_0^2}\big)\big(\dot{\xi}\eta + \xi\dot{\eta}\big) -\frac{m-2}{m}\frac{2\mu r}{f_0^3}\xi\eta=\lambda'\xi\eta.
\end{equation}
Since $(L_\mu^{f(t)}+\lambda(t))\xi(t)=0$ and $(L_\mu^{f(t)}+\lambda(t))\eta(t)=0$, we compute
\begin{eqnarray*}
-(L_\mu^{f_0}+\lambda_0)\dot{\xi}=((L_\mu^{f})'+\lambda')\xi \stackrel{\eqref{Lfmu}}{=} m \langle\nabla\frac{r}{f_0},\nabla\xi\rangle + \big(\lambda'+\frac{2\mu r}{f_0^3}\big)\xi,\\
-(L_\mu^{f_0}+\lambda_0)\dot{\eta}=((L_\mu^{f})'+\lambda')\eta \stackrel{\eqref{Lfmu}}{=} m \langle\nabla\frac{r}{f_0},\nabla\eta\rangle + \big(\lambda'+\frac{2\mu r}{f_0^3}\big)\eta.
\end{eqnarray*}
Besides, we note that $\dot{\xi}$ and $\dot{\eta}$ can be taken such that
$$\int_{B}\dot{\xi}\phi f_0^m\dB=\int_{B}\dot{\eta}\phi f_0^m\dB=0$$
for all eigenfunction $\phi$ corresponding to $\lambda_0$.

In order to get more information about eigenfunctions $\xi$ and $\eta$ we look on the left-hand side of \eqref{identity_separete_2_derivated} as an operator $\Gamma$ from $ H^1(B)$ to $L^1(B)$ as follows
\begin{equation}
r\stackrel{\Gamma}{\longmapsto} \langle\nabla\dot{\xi},\nabla\eta\rangle + \langle\nabla\xi,\nabla\dot{\eta}\rangle + \big(-\lambda_0 +\frac{m-2}{m}\frac{\mu}{f_0^2}\big)\big(\dot{\xi}\eta + \xi\dot{\eta}\big) -\frac{m-2}{m}\frac{2\mu r}{f_0^3}\xi\eta
\end{equation}

Now, we can proceed as in Henry \cite[Chapter~7]{Henry} to deduce from \eqref{identity_separete_2_derivated} that $\Gamma$ is a finite rank operator of order $1$, cf. \cite[Section~7.1]{Henry}. Moreover, we can find the principal part of $\Gamma$ by
\begin{equation*}
r\longmapsto kP.V.\int_B \langle G(x,y)\otimes\nabla r(y), Sym(\nabla\xi\otimes\nabla\eta)\rangle\dB,
\end{equation*}
where ``$P.V.$" stands for Cauchy principal value, $G(x,y)$ for the Green's function for the Laplacian, and $Sym$ for the tensor's symmetrization. Since $\Gamma$ has finite rank, it is necessary that
\begin{equation*}
\langle X\otimes Y, Sym(\nabla \xi \otimes \nabla \eta)\rangle=0,
\end{equation*}
cf. \cite[Section~7.4]{Henry}, see also p.~98 of \cite{Henry} to an analogous example. In particular, $X(\xi)X(\eta)=0$ for all $X\in \mathfrak{X}(B)$. As a consequence we have $|\nabla\phi_i|= |\nabla\phi_j|$ on $B$. Coming back to analysis of Eq.~\eqref{identity_separete} we conclude that $\phi_i=\phi_j$ on $B$, which is a contradiction.
\end{proof}

\begin{theorem}\label{warped_theo}
Define $\mathcal{B}$ to be the set of warping functions $f$ of class $C^k$ such that $\Delta^{B\times_fF}$ has warped--simple spectrum. Then $\mathcal{B}$ is residual.
\end{theorem}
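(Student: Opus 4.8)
The plan is to deduce genericity of warped--simplicity from the splitting result of Theorem~\ref{separete_L} together with the multiplicity formula~\eqref{multiplicity formula}, via a standard Baire-category argument. First I would reformulate the defining property: by~\eqref{multiplicity formula}, an eigenvalue $\lambda$ of $\Delta^{B\times_f F}$ is warped--simple precisely when it arises from a single $\mu \in spec(\Delta^F)$ with $m_{L^f_\mu}(\lambda)=1$, so $\Delta^{B\times_f F}$ has warped--simple spectrum if and only if, for every eigenvalue $\mu_i$ of $\Delta^F$, \emph{all} eigenvalues of $L^f_{\mu_i}$ are simple \emph{and} no two operators $L^f_{\mu_i}$, $L^f_{\mu_j}$ (with $\mu_i\neq\mu_j$) share a common eigenvalue. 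The first condition is what Theorem~\ref{separete_L} is tailored for; the second is a transversality/distinctness condition that should be handled by the same perturbation argument since one can always perturb to move one eigenvalue relative to another.

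Next I would set up the countable intersection. For each pair $(i, N)$ with $i\in\mathbb{N}$ and $N\in\mathbb{N}$, let $\mathcal{A}_{i,N}\subset C^k_+(B)$ be the set of warping functions $f$ such that the first $N$ eigenvalues of $L^f_{\mu_i}$ are simple; and for each triple $(i,j,N)$ with $\mu_i\neq\mu_j$, let $\mathcal{C}_{i,j,N}$ be the set of $f$ such that the first $N$ eigenvalues of $L^f_{\mu_i}$ and of $L^f_{\mu_j}$ are pairwise distinct. Each of these sets is \emph{open} in $C^k_+(B)$ by the continuity statement~\eqref{kato-continuity} applied to the family $L^f_\mu$ (which, as the excerpt notes, enjoys the same Kato-type continuity as $\Delta_g$): simplicity and distinctness are open conditions on finitely many eigenvalues. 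Each is \emph{dense}: density of $\mathcal{A}_{i,N}$ follows by applying Theorem~\ref{separete_L} repeatedly — around any $f_0$ and any multiple eigenvalue $\lambda_0$ of $L^{f_0}_{\mu_i}$ with $|\lambda_0|$ among the first $N$, one finds arbitrarily close $f$ splitting all eigenvalues near $\lambda_0$, and finitely many such splittings (shrinking neighborhoods each time as in~\eqref{kato-continuity}) handle all of the first $N$ eigenvalues at once; density of $\mathcal{C}_{i,j,N}$ follows by an entirely analogous perturbation using the variational formula~\eqref{derivate_L}, noting that for $\mu_i\neq\mu_j$ the right-hand sides of~\eqref{derivate_L} for the two operators differ (the coefficient $(m-4)\mu/f_0^2$ depends on $\mu$), so a generic $r$ separates the two eigenvalue curves. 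Then
\begin{equation*}
\mathcal{B}=\bigcap_{i,N}\mathcal{A}_{i,N}\ \cap\ \bigcap_{\substack{i,j,N\\ \mu_i\neq\mu_j}}\mathcal{C}_{i,j,N}
\end{equation*}
is a countable intersection of open dense sets, hence residual, since $C^k_+(B)$ is an open subset of a Banach space and thus a Baire space.

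The main obstacle I anticipate is the density of $\mathcal{C}_{i,j,N}$, i.e., showing that a common eigenvalue of $L^f_{\mu_i}$ and $L^f_{\mu_j}$ with $\mu_i\neq\mu_j$ can be destroyed by an arbitrarily small perturbation of $f$. One must verify that the two eigenvalue-curve derivatives given by~\eqref{derivate_L} cannot coincide identically in $r$ over an open set of $f$; this is where the genuine $\mu$-dependence of the operator is essential, and it will likely require an argument in the same spirit as the finite-rank/principal-symbol analysis of Theorem~\ref{separete_L} rather than a one-line computation. A secondary bookkeeping point is to make the "finitely many successive splittings" step precise: after splitting the eigenvalues near one cluster, the neighborhood must be shrunk so that the continuity bound~\eqref{kato-continuity} still isolates the remaining clusters, and one must check the process terminates because only finitely many of the first $N$ eigenvalues can be non-simple. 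Once these are in place, the Baire argument is routine.
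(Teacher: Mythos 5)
Your overall framework is the same as the paper's: write $\mathcal{B}$ as a countable intersection of sets that are open (by continuity of eigenvalues) and dense (by a splitting argument), and observe that warped--simplicity decomposes into two conditions --- simplicity of the eigenvalues of each $L^f_{\mu_i}$, handled by Theorem~\ref{separete_L}, and non-coincidence of eigenvalues of $L^f_{\mu_i}$ and $L^f_{\mu_j}$ for $\mu_i\neq\mu_j$. The first half is fine. The problem is that the entire content of the theorem beyond Theorem~\ref{separete_L} lives in the second half, the density of your $\mathcal{C}_{i,j,N}$, and there you offer only the heuristic that ``the coefficient depends on $\mu$, so a generic $r$ separates the two curves,'' together with a guess that the finite-rank/principal-symbol analysis of Theorem~\ref{separete_L} would do the job. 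Neither is adequate. Equating the two derivative formulas \eqref{warped_derivative} for $\mu$ and $\overline{\mu}$ and using arbitrariness of $r$ yields the pointwise identity
\begin{equation*}
\lambda\big[(\phi^\mu)^2-(\phi^{\overline{\mu}})^2\big]+\frac{2-m}{mf_0^2}\big[\mu(\phi^\mu)^2-\overline{\mu}(\phi^{\overline{\mu}})^2\big]-\big[|\nabla\phi^\mu|^2-|\nabla\phi^{\overline{\mu}}|^2\big]=0,
\end{equation*}
in which the gradient terms enter identically for both operators; the $\mu$--dependence sits only in the zeroth-order coefficient. A principal-part argument in the style of Theorem~\ref{separete_L} would at best give you $|\nabla\phi^\mu|=|\nabla\phi^{\overline{\mu}}|$, which is not yet a contradiction, and your ``genuine $\mu$-dependence'' observation fails outright when $m=2$, since the coefficient $\frac{2-m}{mf_0^2}$ then vanishes.

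What is actually needed (and what the paper does) is Uhlenbeck's integral-curve trick. Set $\xi=\phi^\mu+\phi^{\overline{\mu}}$, $\eta=\phi^\mu-\phi^{\overline{\mu}}$ and $q=\frac{2-m}{mf_0^2}[\mu(\phi^\mu)^2-\overline{\mu}(\phi^{\overline{\mu}})^2]$, so the identity becomes $\langle\nabla\eta,\nabla\xi\rangle-\lambda\xi\eta=q$. Along an integral curve $x(t)$ of $\nabla\xi$ starting where $\xi>0$, the function $u(t)=\eta(x(t))$ solves the linear ODE $\frac{du}{dt}-\lambda\xi u=q$; boundedness of $u$ forces $u(t)\to 0$, and evaluating at a point $x_\omega$ of the $\omega$--limit set (where $\nabla\xi(x_\omega)=0$) gives $\eta(x_\omega)=0$ and $q(x_\omega)=0$, hence $\frac{2-m}{m f_0(x_\omega)^2}(\mu-\overline{\mu})\phi^\mu(x_\omega)^2=0$, a contradiction when $m\neq2$ and $\mu\neq\overline{\mu}$ since $2\phi^\mu(x_\omega)=\xi(x_\omega)>0$; the residual case $m=2$ is disposed of by Uhlenbeck's original argument. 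This dynamical step is the missing idea in your proposal, and without it the density of $\mathcal{C}_{i,j,N}$ --- which you yourself flag as the main obstacle --- remains unproved.
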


\begin{proof}
For each integer $i\geq1$, define the set
\begin{equation*}
B_i=\{f\in C_+^k(B): \mbox{all eigenvalues $\lambda < i$ of $\Delta^{B\times_f F}$ are warped--simple}\}\subset C_+^k(B).
\end{equation*}

Note that $\mathcal{B}=\cap^\infty_{i=1}B_i$. Hence, it is enough to show that for each $i\geq1$, $B_i$ is a dense open set. The openness follows from continuity of the eigenvalues in general metric perturbation setting, see~\cite{Bando} for details.

For the denseness part, we argue by contradiction. Suppose that $B_i$ is not dense for some $i$, then  there are a function $f_0 \in C_+^k(B)$ and an eigenvalue $\lambda< i$ such that for any function $r \in C^k(B)$ the perturbation $g(t)= g_B+(f_0+tr)^2g_F$ fails to split the eigenvalue $\lambda$. In this case, there is at least a pair of curves  $\lambda(t)$ and $\overline{\lambda}(t)$ of eigenvalues of $L_\mu^{f(t)}$ and $L_{\overline{\mu}}^{f(t)}$, respectively, such that $\lambda(0)=\overline{\lambda}(0)=\lambda$ and $\lambda'(t)=\overline{\lambda}'(t)$ for small $t$. Thus, from Eq.~\eqref{warped_derivative} we get
\begin{eqnarray*}
\int_B\big\{ m\lambda[(\phi^\mu)^2-(\phi^{\overline{\mu}})^2] +(2-m)\frac{1}{f_0^2}[\mu(\phi^\mu)^2-\overline{\mu}(\phi^{\overline{\mu}})^2]\\
-m[|\nabla\phi^\mu|^2-|\nabla\phi^{\overline{\mu}}|^2]\big\}f_0^{m+1}r\dB=0.
\end{eqnarray*}
As $r$ is an arbitrary function, we obtain the following equation on the base $B$
\begin{equation}\label{derivative_eq}
\lambda[(\phi^\mu)^2-(\phi^{\overline{\mu}})^2] +\frac{2-m}{mf_0^2}[\mu(\phi^\mu)^2-\overline{\mu}(\phi^{\overline{\mu}})^2]-[|\nabla\phi^\mu|^2-|\nabla\phi^{\overline{\mu}}|^2]=0.
\end{equation}
There are two cases to consider: $\mu=\overline{\mu}$ and $\mu\neq\overline\mu$. The $\mu=\overline{\mu}$ case has been proved in our Theorem~\ref{separete_L}. The idea to show that \eqref{derivative_eq} cannot hold if $\mu\neq\overline{\mu}$ is motivated by an argument of Uhlenbeck~\cite{Uhlenbeck}. We define
\begin{equation*}
\xi:=\phi^\mu+\phi^{\overline{\mu}},\quad \eta:=\phi^\mu-\phi^{\overline{\mu}}\quad \mbox{and}\quad q:=\frac{2-m}{mf_0^2}[\mu(\phi^\mu)^2-\overline{\mu}(\phi^{\overline{\mu}})^2].
\end{equation*}
So, Eq.~\eqref{derivative_eq} becomes $\langle\nabla\eta,\nabla\xi\rangle-\lambda\xi\eta=q$.

Now, it is enough to consider $m\neq2$, as the $m=2$ case follows from Uhlenbeck's argument. For this, we take an integral curve $x(t)$ of $\nabla\xi$, so that $u(t)=\eta(x(t))$ is a solution to $\frac{du}{dt}-\lambda\xi u=q$. Without loss of generality, we can choose $x_0$ in $B$ such that $\xi(x_0)>0$. So, $\xi(x(t))$ is a non-decreasing bounded function on $B$, and the solution of the latter ordinary differential equation must satisfy: either $u(t)$ is unbounded or $\lim_{t\to\infty}u(t)=0$. Then only the second occurs, since $u(t)=\eta(x(t))$ is bounded.

Let $x_\omega$ be a point in \emph{$\omega$--limit set} of $x_0$, then there is a sequence $x(t_k)$ such that $\lim_{k\to\infty} x(t_k)=x_\omega$ and $\nabla\xi(x_\omega)=0$. Hence, we have
\begin{equation*}
0=\lim_{k\to\infty}u(t_k)=\eta(x_\omega)=(\phi^\mu-\phi^{\overline{\mu}})(x_\omega)
\end{equation*}
and
\begin{equation*}
q(x_\omega)=\lim_{k\to\infty} q(x(t_k))=\lim_{k\to\infty} \big(\langle\nabla\eta (x(t_k)),\nabla\xi (x(t_k))\rangle-\lambda\xi (x(t_k))\eta(x(t_k)\big)=0.
\end{equation*}
Whence, we conclude that
\begin{equation*}
0=q(x_\omega)=\frac{2-m}{m(f_0(x_\omega))^2}(\mu-\overline{\mu})(\phi^\mu(x_\omega))^2,
\end{equation*}
which is a contradiction, since $m\neq2$, $\mu\neq\overline{\mu}$ and $2\phi^\mu(x_\omega)=\xi(x_\omega)> 0$. This completes our proof.
\end{proof}

\section{Riemannian $G$--Manifolds}

In this section, we apply Theorem~\ref{warped_theo} to give an answer to the generic situation of multiplicity of the eigenvalues of the Laplacian on a class of compact $G$--manifolds, where $G$ is a compact Lie group. We recall that a \emph{$G$--manifold} is a smooth manifold $M$ endowed with a smooth effective action by a Lie group $G$. If for a Riemannian metric $g$ on $M$ holds $G\subset Iso(M,g)$ then we will call it $G$--invariant metric and $(M,g)$ a Riemannian $G$--manifold. The space of all $G$--invariant $C^k$~Riemannian metrics on $M$ will be denoted by $\mathcal{M}^k_G$.

In a Riemannian $G$--manifold $(M,g)$ each Laplace eigenspace $E_\lambda$ is a linear real representation of $G$, since the Laplacian commutes with isometries. In general, $E_\lambda$ is reducible,  in this case, it is possible to split them into irreducible real representations of $G$, namely $E_\lambda=\bigoplus_{\sigma}m(\lambda,\sigma,g)W_\sigma$, where $(\sigma, W_\sigma)$ runs over the set of all equivalence class of irreducible representations of $G$ and $m(\lambda,\sigma,g)$ is the multiplicity of $(\sigma, W_\sigma)$ in $E_\lambda$.

We say that an eigenvalue $\lambda$ of $\Delta_g$ is \emph{$G$--simple} if its corresponding eigenspace is an irreducible real representation of $G$. When all eigenvalues are $G$--simple we denote the spectrum of $\Delta_g$ as \emph{$G$--simple spectrum} and the associated Riemannian metric $g$ as \emph{$G$--simple metric}.

Let $M=B\times G/K$ be a $G$--manifold, where $B$ is a compact smooth manifold and $G/K$ is a compact homogeneous space. Suppose that each element $\tilde\gamma$ of $G$ (as a diffeomorphism of $M$) is given by $\tilde\gamma(x,y)= (x,\gamma y)$, with $\gamma$ acting on $G/K$. Our main result is

\begin{theorem}\label{G-eigenvalue-separate}
Let $M=B\times G/K$ be a $G$--manifold where $G$ is given as above, and let $g_{G/K}$ be a $G$--simple metric on $G/K$. Given a metric $g_0$ in $\mathcal{M}^k_G$ and an eigenvalue $\lambda_0$ of $\Delta_{g_0}$ such that its corresponding eigenspace is not irreducible representation of $G$. Consider a positive number $\epsilon_{\lambda_0,g_0}$ and a neighborhood $\mathcal{V}_{\epsilon}$ of $g_0$ in $\mathcal{M}^k_G$ as in \eqref{kato-continuity}. Then for each open neighborhood $\mathcal{U}\subset \mathcal{V}_{\epsilon}$ there is $g\in\mathcal{U}$ such that all eigenvalues $\lambda$ of $\Delta_g$ with $|\lambda-\lambda_0|<\epsilon_{\lambda_0,g_0}$ are $G$--simple.
\end{theorem}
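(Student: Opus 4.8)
\textbf{Proof plan for Theorem~\ref{G-eigenvalue-separate}.}
The plan is to reduce the $G$-manifold statement to the warped-product splitting theorem already established. First I would choose a warped-product structure on $M=B\times G/K$ compatible with the $G$-action: since every $\tilde\gamma\in G$ acts only on the $G/K$ factor, a metric of the form $g=g_B+f^2 g_{G/K}$ with $f\in C^k_+(B)$ and $g_{G/K}$ the given $G$-simple metric is automatically $G$-invariant, so such warped metrics lie in $\mathcal{M}^k_G$. Starting from the given $g_0\in\mathcal{M}^k_G$, I would first need to argue that after an arbitrarily small $G$-equivariant perturbation we may assume $g_0$ itself is of warped-product type $g_B+f_0^2 g_{G/K}$; alternatively, and more safely, I would restrict attention from the outset to the slice $\{g_B+f^2 g_{G/K}: f\in C^k_+(B)\}$ of $\mathcal{M}^k_G$, so that perturbations $g(t)=g_B+(f_0+tr)^2 g_{G/K}$ are exactly the warping-function perturbations of Section~\ref{Sec-GWSE}. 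Within this slice the Laplacian $\Delta^{B\times_f G/K}$ decomposes via \eqref{multiplicity formula} through the operators $L^f_\mu$ on $B$, where $\mu$ ranges over $\mathrm{spec}(\Delta^{G/K})$.

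Next I would identify the $G$-representation structure of the eigenspaces with the fiber eigenspaces. For each eigenvalue $\mu$ of $\Delta^{G/K}$, the $\mu$-eigenspace $E^{G/K}_\mu$ is a $G$-representation, and because $g_{G/K}$ is $G$-simple this representation is irreducible; call it $(\sigma_\mu, W_{\sigma_\mu})$. By Eq.~\eqref{laplace-warped2}, an eigenvalue $\lambda$ of $\Delta^{B\times_f G/K}$ has eigenspace $\bigoplus_{\mu}\big(\text{($\lambda$-eigenspace of $L^f_\mu$)}\otimes E^{G/K}_\mu\big)$, and since $G$ acts trivially on the base-factor, the decomposition of this eigenspace into $G$-irreducibles is precisely $\bigoplus_\mu m_{L^f_\mu}(\lambda)\, W_{\sigma_\mu}$. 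Hence $\lambda$ is $G$-simple if and only if it is warped-simple in the sense of Section~\ref{Sec-GWSE}: exactly one $\mu$ contributes and with $m_{L^f_\mu}(\lambda)=1$. This is the conceptual heart of the reduction and I would state it as a short lemma.

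With that dictionary in place, the theorem follows from the splitting machinery: apply Theorem~\ref{separete_L} to each operator $L^f_\mu$ that has an eigenvalue in the window $(\lambda_0-\epsilon_{\lambda_0,g_0},\lambda_0+\epsilon_{\lambda_0,g_0})$ — there are only finitely many such $\mu$ by the Weyl asymptotics and the continuity estimate \eqref{kato-continuity} — to remove multiplicities within each fiber level, and then handle the $\mu\neq\overline\mu$ case, i.e. two distinct fiber levels contributing the same base-eigenvalue, exactly by the dynamical-systems argument of Theorem~\ref{warped_theo} (integral curves of $\nabla\xi$ and the $\omega$-limit contradiction, which needs $m\neq2$; for $m=\dim G/K=2$ one falls back on Uhlenbeck's argument as in that proof). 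Intersecting the finitely many residual/dense conditions over this finite set of $\mu$'s, and over pairs $(\mu,\overline\mu)$, yields a metric $g=g_B+f^2 g_{G/K}\in\mathcal{U}$ for which every eigenvalue in the window is warped-simple, hence $G$-simple.

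The main obstacle I expect is not the spectral analysis — that is essentially quoted from Theorems~\ref{separete_L} and \ref{warped_theo} — but the passage from a \emph{general} $G$-invariant metric $g_0$ to one inside the warped-product slice: a priori $g_0\in\mathcal{M}^k_G$ need not be a warped product, and it is not obvious that $G$-invariance forces, or that one can $G$-equivariantly deform $g_0$ into, a metric of the form $g_B+f^2 g_{G/K}$ while keeping $g_{G/K}$ the fixed $G$-simple metric. I would address this either by building into the hypotheses that $g_0$ already has warped-product form (which is how the companion corollaries seem to use it), or by invoking an averaging/normal-form argument for $G$-invariant metrics on a product with the $G$-action concentrated on one factor, reducing the general $G$-invariant metric to a warped product up to an arbitrarily small perturbation so that the conclusion, being an open-and-dense statement, is unaffected.
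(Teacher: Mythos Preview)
Your identification of the dictionary ``warped-simple $\Leftrightarrow$ $G$-simple'' for warped metrics $g_B+f^2g_{G/K}$ with $g_{G/K}$ a $G$-simple metric is correct and is indeed the conceptual bridge. But the obstacle you flag at the end is real and is not resolved by either of your suggestions. A general $G$-invariant metric $g_0$ on $B\times G/K$ need not be a warped product, nor $C^k$-close to one: $G$-invariance only forces the metric at $(x,eK)$ to be $K$-invariant on $T_xB\oplus\mathfrak{g}/\mathfrak{k}$, so there may be nontrivial cross-terms and the fiber part may be any $K$-invariant inner product varying with $x$, not a scalar multiple of the fixed $g_{G/K}$. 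No averaging or normal-form argument collapses this to the warped slice, and restricting the statement to that slice would weaken the theorem. Hence your plan, which perturbs only the warping function $f$ near $f_0$, does not reach a general $g_0\in\mathcal{M}^k_G$.

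The paper sidesteps this completely by using the warped-product theory only \emph{once}, to manufacture a single $G$-simple metric $g_W=g_B+f^2g_{G/K}$ somewhere in $\mathcal{M}^k_G$ (this is where Theorem~\ref{warped_theo} and your dictionary enter). It then connects the arbitrary $g_0$ to $g_W$ by the straight line $g(t)=(1-t)g_0+tg_W$, which stays in $\mathcal{M}^k_G$ by convexity. The $G$-equivariant Kato/Rellich theorem (Zelditch, Proposition~1.28) gives at most $r$ analytic eigenvalue branches emanating from $\lambda_0$, one per irreducible summand of $E_{\lambda_0}$; since at $t=1$ all eigenvalues of $\Delta_{g_W}$ are $G$-simple, these branches are not identically equal, hence by analyticity they coincide only at isolated values of $t$. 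Choosing $t^*\in(0,1)$ small enough to land in $\mathcal{U}$ and to avoid the finitely many crossings yields the desired $g=g(t^*)$. The point is that the splitting is achieved by a global analytic-continuation argument rather than by local variational formulas near $g_0$, so no hypothesis on the form of $g_0$ is needed.
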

\begin{proof}
Let $g_{G/K}$ be a $G$--simple metric on $G/K$, and let $g_B$ be a Riemannian metric on $B$. By Theorem~\ref{warped_theo} we can choose a function $f\in C^k_+(B)$ such that $g_W:=g_B+f^2g_{G/K}$ is a warped--simple metric on $M=B\times_f G/K$. Note that  $g_W$ is a $G$--simple metric, since $g_{G/K}$ is a $G$--simple metric on $G/K$.

Consider $\lambda_0$ an eigenvalue of $\Delta_{g_0}$ (on $M$) with corresponding eigenspace $E_{\lambda_0}=W_1\oplus W_2\oplus\cdots\oplus W_r$, $r>1$, splitting into irreducible representations of $G$ (not necessarily non-isomorphic each other). In this situation \eqref{kato} may be adapted to $G$--invariant metric perturbations case to conclude that there is at most $r$ different analytic curves of eigenvalues, cf. \cite[Proposition~1.28]{zelditch}. Taking the analytic family of Riemannian metrics $g(t)=(1-t)g_0+tg_W$, those $r$ curves may cross each other just finite number of times, since  the eigenvalues of $g(1)=g_W$ are all $G$--simple. Then, we can choose $t^*\in (0,1)$ such that all eigenvalues $\lambda(t^*)$, $|\lambda(t^*)-\lambda_0|<\epsilon_{\lambda_0,g_0}$ are $G$--simple eigenvalues, hence the Riemannian metric $g(t^*)$ has the property desired.
\end{proof}

\begin{corollary}\label{generic-G-eigenvalue}
Under the same hypothesis as in Theorem~\ref{G-eigenvalue-separate} the set of $G$--simple metrics in $\mathcal{M}^k_G$ is residual.
\end{corollary}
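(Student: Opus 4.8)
The plan is to deduce Corollary~\ref{generic-G-eigenvalue} from Theorem~\ref{G-eigenvalue-separate} by the standard Baire-category exhaustion, exactly in the spirit of the proof of Theorem~\ref{warped_theo}. For each integer $i\geq 1$, let
\begin{equation*}
\mathcal{G}_i=\{g\in\mathcal{M}^k_G:\text{ every eigenvalue }\lambda<i\text{ of }\Delta_g\text{ is }G\text{--simple}\}.
\end{equation*}
Then the set of $G$--simple metrics is precisely $\bigcap_{i\geq 1}\mathcal{G}_i$, so since $\mathcal{M}^k_G$ is a Baire space it suffices to prove that each $\mathcal{G}_i$ is open and dense in $\mathcal{M}^k_G$.

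For openness I would invoke the continuity of the spectrum under metric perturbations together with the semicontinuity of the isotypic decomposition: if $g_0\in\mathcal{G}_i$, then by \eqref{kato-continuity} adapted to the $G$--invariant setting (as already used in the proof of Theorem~\ref{G-eigenvalue-separate}, cf. \cite[Proposition~1.28]{zelditch}) the finitely many eigenvalues of $\Delta_g$ below $i$ stay close to those of $\Delta_{g_0}$ for $g$ near $g_0$, and the multiplicity of each irreducible type $W_\sigma$ in the corresponding eigenspaces is upper semicontinuous; since each such eigenspace for $g_0$ is a single irreducible $G$--representation, it cannot break up any further, so a whole $\mathcal{M}^k_G$--neighborhood of $g_0$ lies in $\mathcal{G}_i$. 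One should be slightly careful that eigenvalues do not enter from above through the threshold $i$, which is handled by shrinking the neighborhood so that no eigenvalue crosses $i$, or equivalently by working with the open condition ``$\lambda<i$'' and noting it only removes metrics, keeping the set open.

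For denseness, fix $i$ and a metric $g_0\in\mathcal{M}^k_G$. List the finitely many eigenvalues $\lambda_0<i$ of $\Delta_{g_0}$ whose eigenspaces fail to be irreducible; for each of them apply Theorem~\ref{G-eigenvalue-separate} (with the $G$--simple metric $g_{G/K}$ on $G/K$ fixed once and for all) inside a prescribed neighborhood $\mathcal{U}$ of $g_0$ to obtain a nearby $G$--invariant metric for which all eigenvalues near that $\lambda_0$ are $G$--simple. Iterating over the finite list of bad eigenvalues — each step performed inside a sufficiently small neighborhood of the previous metric so that earlier eigenvalues, being $G$--simple and hence stable by the openness just proved, remain $G$--simple — produces a metric in $\mathcal{U}\cap\mathcal{G}_i$. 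Since $\mathcal{U}$ was an arbitrary neighborhood of $g_0$, this shows $\mathcal{G}_i$ is dense.

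I expect the main subtlety to be the openness step rather than the denseness step: one must make sure that the perturbation theory statements \eqref{kato-continuity}–\eqref{kato}, which were stated for general metrics, genuinely restrict to the $G$--invariant subspace $\mathcal{M}^k_G$ and that the irreducibility of an eigenspace is preserved — this is where the upper semicontinuity of the multiplicities $m(\lambda,\sigma,g)$ and the reference \cite[Proposition~1.28]{zelditch} do the real work. The finite-iteration argument in the denseness step is routine but relies on openness to avoid spoiling eigenvalues that were already made $G$--simple, so the two parts are logically intertwined and should be presented in that order.
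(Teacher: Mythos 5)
Your proposal is correct and follows essentially the same route as the paper, whose proof is a one-line reference to the Baire-category exhaustion used for Theorem~\ref{warped_theo} (openness from continuity of eigenvalues, denseness from the splitting in Theorem~\ref{G-eigenvalue-separate}). Your write-up simply fills in the details (the sets $\mathcal{G}_i$, the upper semicontinuity of the isotypic multiplicities, and the finite iteration over bad eigenvalues) that the paper leaves implicit.
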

\begin{proof}
The argument follows the same lines of the proof of Theorem~\ref{warped_theo} since the eigenvalues of the Laplacian are continuous as functions on $\mathcal{M}^k_G$ and we can split them using Theorem~\ref{G-eigenvalue-separate}.
\end{proof}

A prototype example of symmetric space $G/K$ equipped with a $G$--simple metric is $SO(n+1)/SO(n)$ equipped with the canonical metric. However, such metric always exists in the case of symmetric spaces given by quotients of Lie groups $G$ modulo maximal compact subgroups $K$, see \cite[Theorem~2 and examples of p.~249]{gurarie}.

Schueth~\cite{schueth} established an algebraic criteria for the existence of a $G$-left invariant metric $g$ on $G$ such that all Laplace eigenspaces are irreducible representations of $G$. In particular, it holds for $SO(3), SO(4), U(4)$ and $SU(2)\times SU(2)\times\cdots\times SU(2)\times T$, where $T$ is a torus. We can put Schueth criteria and the Corollary~\ref{generic-G-eigenvalue} together in order to obtain the generic result for trivial principal $G$--bundle. Obviously, the result holds for any principal $G$--bundle $P$ isomorphic to $M\times G$.

\begin{corollary}\label{question2}
Let $P$ be a principal $G$--bundle isomorphic to the trivial principal $G$--bundle. If $G$ satisfies the Schueth criteria then the set of $G$--simple metrics on $P$ is residual in $\mathcal{M}^k_G$.
\end{corollary}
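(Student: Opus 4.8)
\textbf{Proof proposal for Corollary \ref{question2}.}

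The plan is to reduce the statement to a direct application of Corollary \ref{generic-G-eigenvalue}, using the isomorphism with the trivial bundle to put $P$ in the product form required by Theorem \ref{G-eigenvalue-separate}. First I would observe that since $P$ is isomorphic (as a principal $G$-bundle) to $M \times G$ for some compact base $M$, any isomorphism $\Phi\colon P \to M \times G$ is a $G$-equivariant diffeomorphism; hence it induces a homeomorphism between $\mathcal{M}^k_G(P)$ and $\mathcal{M}^k_G(M\times G)$ that preserves isometry classes, Laplace spectra, and the $G$-representation structure of each eigenspace. Consequently the set of $G$-simple metrics on $P$ is residual if and only if the corresponding set on $M\times G$ is residual, so it suffices to treat $P = M\times G$.

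Next I would verify that $M\times G$ fits the framework of Theorem \ref{G-eigenvalue-separate}, with the homogeneous-space factor taken to be $G/K$ for $K = \{e\}$, so that $G/K = G$ and the $G$-action on $M\times G$ is $\tilde\gamma(x,y) = (x,\gamma y)$ by left translation on the second factor --- which is exactly the action form assumed in that theorem, and it is free and effective. The one genuine input needed is a $G$-simple metric on the fiber $G$ itself: this is precisely what the Schueth criteria provide. By hypothesis $G$ satisfies those criteria, so by Schueth's result \cite{schueth} there exists a left-$G$-invariant metric $g_G$ on $G$ all of whose Laplace eigenspaces are irreducible real representations of $G$; that is, $g_G$ is a $G$-simple metric on $G = G/K$ in the sense defined above. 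With this metric in hand, all hypotheses of Theorem \ref{G-eigenvalue-separate} (and hence of Corollary \ref{generic-G-eigenvalue}) are satisfied for $M^n \times G$.

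Finally I would invoke Corollary \ref{generic-G-eigenvalue} directly: it asserts that, under exactly these hypotheses, the set of $G$-simple metrics in $\mathcal{M}^k_G(M\times G)$ is residual. Transporting this back along $\Phi$ gives that the set of $G$-simple metrics on $P$ is residual in $\mathcal{M}^k_G(P)$, which is the claim. I do not expect a serious obstacle here --- the corollary is essentially a bookkeeping combination of two previously established facts --- but the one point that warrants care is checking that the equivariant bundle isomorphism really does intertwine the Laplace operators and the $G$-representations on eigenspaces (so that ``$G$-simple'' is transported faithfully); this is where one must be slightly careful, since the isomorphism need not be an isometry, so one should instead argue at the level of the sets of metrics and note that pullback of metrics commutes with everything in sight.
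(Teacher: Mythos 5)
Your proposal is correct and follows exactly the route the paper intends: use Schueth's criteria to produce a $G$--simple left-invariant metric on $G=G/\{e\}$, apply Corollary~\ref{generic-G-eigenvalue} to $M\times G$, and transport the conclusion to $P$ along the equivariant bundle isomorphism (a step the paper dismisses as obvious but which you rightly flag as the point needing a pullback argument). No substantive difference from the paper's argument.
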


The irreducibility of the eigenspaces of the Laplacian on general principal $G$--bundle is an open problem yet. We will hope to address that question in the next work.

\noindent\textbf{Acknowledgements:}
The authors would like to express their sincere thanks to Department of Mathematics at Lehigh University, where part of this work was carried out. They are grateful to Huai-Dong Cao and Mary Ann for their warm hospitality and constant encouragement. The first author is also grateful to G\'erard Besson for his time inspiring and helpful discussions, as well as to Institut Fourier in Grenoble France for a good atmosphere while this work was done.

\end{document}